\theoremstyle{plain}
\newtheorem{theorem}{Theorem}[section]
\newtheorem{lemma}[theorem]{Lemma}
\theoremstyle{definition}
\newtheorem{definition}[theorem]{Definition}
\theoremstyle{remark}
\numberwithin{equation}{section}
\newenvironment{acknowledgement}[1][Acknowledgement
]{\begin{trivlist} \item[\hskip \labelsep {\bfseries
#1}]}{\end{trivlist}}
\begin{document}
\title{Partial Information for Inverse Spectral Uniqueness \\in Vibration System with Multiple Frozen Arguments }
\author{\small Lung-Hui Chen}
\affil{\footnotesize General Education Center, Ming Chi University of Technology, New Taipei City, 24301, Taiwan;\\
Email: mr.lunghuichen@gmail.com.}
\date{}
\maketitle
\begin{abstract}
In this paper, we investigate the inverse spectral problem of the Sturm-Liouville operator 
with many frozen arguments fixed at the points $\{a_{1}, a_{2},\ldots,a_{N}\}$ in $(0,\pi)$. We start with counting the zeros or the eigenvalues of characteristic function, and then discuss how certain information provided  a priori on the point set $\{a_{1}, a_{2},\ldots,a_{N}\}$  would affect  the uniqueness or non-uniqueness of this vibration system with many frozen points. The knowledge at the frozen or regulator points are practical in many on-site problems. Parallelly, certain irrational independence assumption assures the inverse spectral uniqueness as well.
\\Keywords:  Sturm-Liouville operator; frozen argument; inverse spectral problem; entire function; regulator network; zero density theory.
\end{abstract}
\section{Introduction}
In this paper, the author studies the inverse spectral problem of recovering the real-valued potential function $q(x)\in L_{\mathbb{R}}^{2}(0,\pi)$ from the spectrum of the boundary value problem
$$L=L(q(x),a_{1}, a_{2},\ldots,a_{N},\alpha,\beta)$$ 
of the form
\begin{eqnarray}
\left\{
\begin{array}{ll}\label{1.1}

ly:=-y''(x)+q(x)\sum_{i=1}^{N} y(a_{i})=\lambda y(x),\,0<x<\pi;\vspace{12pt}\\
y^{(\alpha)}(0)=y^{(\beta)}(\pi)=0,\label{1.22}
\end{array}
\right.
\end{eqnarray}
in which $\lambda=\rho^{2},\,\rho\in\mathbb{C},$ is the spectral parameter, and $\alpha$, $\beta\in\{0,1\}$. Here we assume that $0<a_{1}<a_{2}<\cdots<a_{N}<\pi$. The operator $l$ is called the Sturm-Liouville type of operator with $N$ frozen arguments with boundary condition of model~(\ref{1.1}).
\par
In the literature, the equation~(\ref{1.1}) is one kind of loaded differential equations. The equations model many experiments in the fields of applied physics, mathematical physics, and many engineering works focused on the control and measurement of certain vibrations, magnetic shielding, or certain oscillatory systems \cite{Dikinov,Iskenderov,Lomov,Nakhushev}. In many oscillatory systems, one needs to place certain measurement instruments or sensors to monitor the physical states empirically that makes the frozen points in the model~(\ref{1.1}). 
\par
The inverse spectral problems for Sturm-Liouville operators with frozen arguments were previously studied in \cite{Alb,Bon,BK,Shieh} to name a few, and highly pursued by many interested mathematicians and engineers. Recently, the author has published a preprint at arXiv \cite{Chen}. Among the rapidly growing researches as aforementioned, there are a few discussions around the rational or irrational independency of the frozen points $\{a_{1}/\pi, a_{2}/\pi,\ldots,a_{N}/\pi\}$ which is pursued by many mathematicians in the last decade. In this paper, we would also like to show that the rational  or irrational perity of $\{a_{1}/\pi, a_{2}/\pi,\ldots,a_{N}/\pi\}$ both play a significant role while certain knowledge on $q(x)$ is provided a priori. There are many treatments to provide such knowledge through the balance of zero density on both sides of characteristic equations.

\section{Preliminaries and Lemmata}
We begin by referencing several identities from \cite{Shieh}, specifically their identities (25), (26), (27), and (28). Here are the characteristic functions all we need for the system~(\ref{1.1}):
\begin{eqnarray}\hspace{-6pt}
\Delta_N^{(0,0)}(\lambda) = \begin{vmatrix}
\frac{\sin \rho a_1}{\rho} & \hspace{5pt}\int_0^{a_1} q(t) \frac{\sin \rho (a_1 - t)}{\rho} dt-1 & 1 & 1 & 1 & \dots & 1 & 1 \\
\frac{\sin \rho a_2}{\rho} & \int_0^{a_2} q(t) \frac{\sin \rho (a_2 - t)}{\rho} dt & -1 & 0 & 0 & \dots & 0 & 0 \\
\frac{\sin \rho a_3}{\rho} & \int_0^{a_3} q(t) \frac{\sin \rho (a_3 - t)}{\rho} dt & 0 & -1 & 0 & \dots & 0 & 0 \\
\vdots & \vdots & \vdots & \vdots & \vdots & \ddots & \vdots & \vdots \\
\frac{\sin \rho a_N}{\rho} & \int_0^{a_N} q(t) \frac{\sin \rho (a_N - t)}{\rho} dt & 0 & 0 & 0 & \dots & 0 & -1 \\
\frac{\sin \rho \pi}{\rho} & \int_0^{\pi} q(t) \frac{\sin \rho (\pi - t)}{\rho} dt & 0 & 0 & 0 & \dots & 0 & 0
\end{vmatrix},
\end{eqnarray}
\begin{eqnarray}\hspace{-5pt}
\Delta_N^{(0,1)}(\lambda) = \begin{vmatrix}
\frac{\sin \rho a_1}{\rho} & \hspace{4pt}\int_0^{a_1} q(t) \frac{\sin \rho (a_1 - t)}{\rho} dt - 1 & 1 & 1 & 1 & \dots & 1 & 1 \\
\frac{\sin \rho a_2}{\rho} & \int_0^{a_2} q(t) \frac{\sin \rho (a_2 - t)}{\rho} dt & -1 & 0 & 0 & \dots & 0 & 0 \\
\frac{\sin \rho a_3}{\rho} & \int_0^{a_3} q(t) \frac{\sin \rho (a_3 - t)}{\rho} dt & 0 & -1 & 0 & \dots & 0 & 0 \\
\vdots & \vdots & \vdots & \vdots & \vdots & \ddots & \vdots & \vdots \\
\frac{\sin \rho a_N}{\rho} & \int_0^{a_N} q(t) \frac{\sin \rho (a_N - t)}{\rho} dt & 0 & 0 & 0 & \dots & 0 & -1 \\
\cos \rho \pi & \int_0^{\pi} q(t)\cos \rho (\pi - t)dt & 0 & 0 & 0 & \dots & 0 & 0
\end{vmatrix},
\end{eqnarray}
\begin{eqnarray}\hspace{-4pt}
\Delta_N^{(1,0)}(\lambda) = \begin{vmatrix}
\cos \rho a_1& \hspace{-1pt}\int_0^{a_1} q(t) \frac{\sin \rho (a_1 - t)}{\rho} dt - 1 & 1 & 1 & 1 & \dots & 1 & 1 \\
\cos \rho a_2& \int_0^{a_2} q(t) \frac{\sin \rho (a_2 - t)}{\rho} dt & -1 & 0 & 0 & \dots & 0 & 0 \\
\cos \rho a_3 & \int_0^{a_3} q(t) \frac{\sin \rho (a_3 - t)}{\rho} dt & 0 & -1 & 0 & \dots & 0 & 0 \\
\vdots & \vdots & \vdots & \vdots & \vdots & \ddots & \vdots & \vdots \\
\cos \rho a_N& \int_0^{a_N} q(t) \frac{\sin \rho (a_N - t)}{\rho} dt & 0 & 0 & 0 & \dots & 0 & -1 \\
\cos \rho \pi& \int_0^{\pi} q(t) \frac{\sin \rho (\pi - t)}{\rho} dt & 0 & 0 & 0 & \dots & 0 & 0
\end{vmatrix},\label{224}
\end{eqnarray}
and
\begin{eqnarray}\hspace{-4pt}
\Delta_N^{(1,1)}(\lambda) = \begin{vmatrix}
\cos \rho a_1 &\hspace{-7pt} \int_0^{a_1} q(t) \frac{\sin \rho (a_1 - t)}{\rho} dt - 1 & 1 & 1 & 1 & \dots & \hspace{-2pt}1 & 1 \\
\cos \rho a_2 & \int_0^{a_2} q(t) \frac{\sin \rho (a_2 - t)}{\rho} dt & -1 & 0 & 0 & \dots & 0 & 0 \\
\cos \rho a_3 & \int_0^{a_3} q(t) \frac{\sin \rho (a_3 - t)}{\rho} dt & 0 & -1 & 0 & \dots & 0 & 0 \\
\vdots & \vdots & \vdots & \vdots & \vdots & \ddots & \vdots & \vdots \\
\cos \rho a_N & \int_0^{a_N} q(t) \frac{\sin \rho (a_N - t)}{\rho} dt & 0 & 0 & 0 & \dots & 0 & -1 \\
-\rho\sin\rho\pi & \int_0^{\pi} q(t)  \cos \rho (\pi - t) dt & 0 & 0 & 0 & \dots & 0 & 0
\end{vmatrix}.
\end{eqnarray}
For any two $q^{1}(x)$, $q^{2}(x)$ in $L^{2}_{\mathbb{R}}(0,\pi)$ that sharing the same characteristic function $\Delta^{(\alpha,\beta)}(\lambda)$, $\alpha,\,\beta\in\{0,1\}$, we define
\begin{equation}
\hat{q}(x):=q^{1}(x)-q^{2}(x).
\end{equation}
Then, we deduce from~(\ref{224})
the following integral identity
\begin{equation}\label{2.2}
\int_{0}^{\pi}\sin{\rho(\pi-t)}\hat{q}(t)dt\Big\{\sum_{i=1}^{N}\cos{\rho a_{i}}\Big\}
=\cos{\rho \pi}\Big\{\sum_{i=1}^{N}\int_{0}^{a_{i}}\sin{\rho(a_{i}-t)}\hat{q}(t)dt\Big\},
\end{equation}
and the other cases have similar expressions. The main point of this paper is to count the roots of identity~(\ref{2.2}).
\par
For reader's convenience, we include the following classical lemma proved by E. C. Titchmarsh \cite[Theorem\,IV]{Titchmarsh} which deals with the zero set of a Fourier transform. A modern version for functions in distributional sense can be found in \cite[Lemma\,1.3]{Tang}.
\begin{lemma}[Titchmarsh]\label{L2.1}
Let $u\in\mathcal{E}'(\mathbb{R})$, the space of distributions with compact support, then
\begin{equation}N_{\mathcal{F}(u)}(r)= \frac{|\mbox{c.h. supp } u|}{\pi}(r + o(1)),\label{T1}
\end{equation} in which
\begin{equation}
N_{f}(r)=\sum_{|z|\leq r}\frac{1}{2\pi}\oint_{z}\frac{f'(\omega)}{f(\omega)}d\omega,\,z\in\mathbb{C},\label{T2}
\end{equation}
and the phrase $|\mbox{c.h. supp }u|$ means  the convex hull of the effective support  of $u$. Moreover, $N_{f}(r)$ is the counting function of the zeros of $f$ inside a ball of radius $r$ in $\mathbb{C}$, and we count the zeros according to their multiplicities.
\end{lemma}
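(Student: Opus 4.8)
The plan is to prove the statement by combining Paley--Wiener theory with Jensen's formula, treating $F:=\mathcal{F}(u)$ as an entire function of exponential type. First I would invoke the Paley--Wiener--Schwartz theorem to conclude that $F$ is entire, and that with $[a,b]$ denoting the convex hull of the effective support of $u$ (so that $|\mbox{c.h. supp }u| = b-a$), the indicator function of $F$ is, under the convention $F(\omega)=\langle u, e^{-i\omega t}\rangle$,
\[
h_F(\theta):=\limsup_{r\to\infty}\frac{\log|F(re^{i\theta})|}{r}= \begin{cases} b\sin\theta, & \theta\in(0,\pi),\\[2pt] a\sin\theta, & \theta\in(\pi,2\pi).\end{cases}
\]
The vertical extent of the indicator diagram is then exactly $b-a$, and a direct computation gives $\frac{1}{2\pi}\int_0^{2\pi}h_F(\theta)\,d\theta = \frac{b-a}{\pi}$.

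Next I would connect this growth to the zero-counting function $N_F$ of~(\ref{T2}) through Jensen's formula in the form
\[
\int_0^r \frac{N_F(t)}{t}\,dt = \frac{1}{2\pi}\int_0^{2\pi}\log|F(re^{i\theta})|\,d\theta - \log|F(0)|.
\]
The heart of the argument is to show that the right-hand side equals $\tfrac{b-a}{\pi}r + o(r)$; equivalently, that the angular average of $\log|F|$ is asymptotically $\tfrac{r}{2\pi}\int_0^{2\pi}h_F(\theta)\,d\theta$. This is precisely the assertion that $F$ has \emph{completely regular growth} in the sense of Levin--Pfluger, namely $\log|F(re^{i\theta})| = r\,h_F(\theta) + o(r)$ uniformly in $\theta$ after excising a family of disks around the zeros having vanishing relative density. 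Once this logarithmic-average asymptotic is established, I would recover $N_F(r)$ itself by a Tauberian step exploiting the monotonicity of $N_F$: for any $\epsilon>0$, comparing $\int_r^{(1+\epsilon)r}\tfrac{N_F(t)}{t}\,dt$ with the bounds $N_F(r)\log(1+\epsilon)$ and $N_F((1+\epsilon)r)\log(1+\epsilon)$, then letting $\epsilon\to 0$, pins down $N_F(r) = \tfrac{b-a}{\pi}(r+o(1))$, which is~(\ref{T1}).

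I expect the main obstacle to be the verification of completely regular growth, equivalently the uniform lower bound $\log|F(re^{i\theta})| \ge r\,h_F(\theta) - o(r)$ away from the zeros. The matching upper bounds follow immediately from the definition of the indicator together with the support of $u$, but ruling out long angular arcs on which $|F|$ is anomalously small is delicate: it is exactly here that one must use that $F$ is dominated near the top of the plane by the endpoint contribution $\sim e^{ibz}$ and near the bottom by $\sim e^{iaz}$, with the remaining part contributing only $o(r)$ to the exponent. For a genuine distribution this forces the \emph{effective} support to reach the endpoints $a$ and $b$, since otherwise the indicator diagram, and hence the density, would shrink; this is precisely why one takes the convex hull of the effective support rather than of the closed support. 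This regularity is the substance of Titchmarsh's original Theorem~IV and of its distributional refinement in \cite{Tang}, so I would either reproduce that contour / argument-principle analysis in detail or, as the lemma is classical, cite it directly.
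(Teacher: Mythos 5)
Your proposal is essentially correct, but it should be said up front that the paper does not actually prove this lemma: its ``proof'' is the single sentence referring all details to \cite{Levin,Tang,Titchmarsh}. So any genuine argument differs from the paper's route by default. Your reconstruction is the standard Cartwright-class proof, and its skeleton is sound: the Paley--Wiener--Schwartz theorem (together with its converse, via Phragm\'en--Lindel\"of) identifies the indicator diagram of $F=\mathcal{F}(u)$ with the segment $i\,[a,b]$, so that $h_F(\theta)=\max\{a\sin\theta,\,b\sin\theta\}$, matching your case formula; the angular average $\frac{1}{2\pi}\int_0^{2\pi}h_F=\frac{b-a}{\pi}$ is right (sanity check: $\sin\rho\pi$, with conjugate diagram of length $2\pi$, has $n(r)\sim 2r$); Jensen's formula plus your monotonicity Tauberian step correctly converts the logarithmic average into the counting asymptotic, the same $N_f(r)=n(r)$ as in~(\ref{T2}). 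You also correctly isolate the one genuinely hard step, the lower bound $\log|F(re^{i\theta})|\ge r\,h_F(\theta)-o(r)$ off an exceptional set, i.e.\ completely regular growth. Here there is a cleaner route than the endpoint-domination contour analysis you sketch: since $u\in\mathcal{E}'(\mathbb{R})$, PWS gives $|F(x)|\le C(1+|x|)^N$ on the real axis, hence $\int\frac{\log^+|F(x)}{1+x^2}|dx<\infty$, so $F$ lies in the Cartwright class, and Levin's theory (the chapter the paper cites at \cite[Ch.~V]{Levin}) yields completely regular growth and the zero density $\frac{b-a}{\pi}$ wholesale; your fallback of citing Titchmarsh's Theorem~IV or \cite{Tang} for this crux is exactly where your proposal and the paper's citation-only proof converge. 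Two small repairs you should make explicit: normalize $F(0)\neq 0$ before applying Jensen (divide out $\omega^m$ if $u$ has vanishing moments; this does not change the density), and note that the polynomial factor $(1+|\omega|)^N$ from PWS contributes only $O(\log r)=o(r)$ to the exponent, so it is harmless in both the indicator computation and the Jensen average. Finally, your remark about the \emph{effective} support is the right instinct but is slightly miscast: for a distribution the support is already well defined and the endpoints $a,b$ lie in it by the definition of the convex hull; the nontrivial content is the converse PWS direction, that $h_F(\pi/2)$ cannot be smaller than $b$ (the ``effective support'' language matters chiefly when $u$ is an $L^2$ function, where vanishing a.e.\ is the relevant notion, as in the paper's later applications).
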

\begin{proof}
We refer all details to \cite{Levin,Tang,Titchmarsh}.
\end{proof}
\begin{definition}
We denote the following quantity as the zero density of an entire function $f$ of finite type:
\begin{equation}
\delta(f):=\lim_{r\rightarrow\infty}\frac{N_{f}(r)}{r}.
\end{equation}
\end{definition}
\begin{lemma}\label{L2.3}
Let $f$, $g$ be two entire functions of finite type with zero density functions $\delta(f)$ and
$\delta(g)$ respectively. Then, 
\begin{eqnarray*}
&&\delta(fg)=\delta(f)+\delta(g); \\
&&\delta(f+g)=\max\{\delta(f),\delta(g)\},\end{eqnarray*}
if the densities of two functions are not equal.
\end{lemma}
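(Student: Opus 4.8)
\section*{Proof proposal}

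The plan is to reduce both assertions to Lemma~\ref{L2.1} by realizing $f$ and $g$ as Fourier transforms of compactly supported distributions, after first disposing of the product formula by an elementary argument straight from the definition~(\ref{T2}). For the product, I would note that since $f$ and $g$ are entire, the zeros of $fg$ are exactly the zeros of $f$ together with the zeros of $g$, counted with multiplicity, and away from these zeros $\frac{(fg)'}{fg}=\frac{f'}{f}+\frac{g'}{g}$. Because the contour integral in~(\ref{T2}) is linear in its integrand, this gives the exact identity $N_{fg}(r)=N_f(r)+N_g(r)$ for all $r$; dividing by $r$ and letting $r\to\infty$ yields $\delta(fg)=\delta(f)+\delta(g)$. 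As a consistency check one may instead use Titchmarsh's formula directly, via $fg=\mathcal{F}(u\ast v)$ and $|\mbox{c.h. supp }(u\ast v)|=|\mbox{c.h. supp }u|+|\mbox{c.h. supp }v|$ for the Minkowski sum of the convex hulls.

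For the sum, I would invoke the Paley--Wiener--Schwartz theorem to write $f=\mathcal{F}(u)$ and $g=\mathcal{F}(v)$ with $u,v\in\mathcal{E}'(\mathbb{R})$, so that Lemma~\ref{L2.1} identifies $\delta(f)=\frac{|\mbox{c.h. supp }u|}{\pi}$ and $\delta(g)=\frac{|\mbox{c.h. supp }v|}{\pi}$. By linearity of the Fourier transform, $f+g=\mathcal{F}(u+v)$, and since $\mbox{supp}(u+v)\subseteq\mbox{supp}\,u\cup\mbox{supp}\,v$, the zero density of $f+g$ is controlled from above by the length of the convex hull of the union of the two supports. The task is therefore to show that, when $\delta(f)\neq\delta(g)$, this convex hull collapses to the larger of the two individual convex hulls, so that $\delta(f+g)=\max\{\delta(f),\delta(g)\}$.

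This last point is where the real work lies, and it is the step I expect to be the main obstacle. The bound $\delta(f+g)\le\frac{|\mbox{c.h.}(\mbox{supp}\,u\cup\mbox{supp}\,v)|}{\pi}$ is immediate, but this is generally strictly larger than the maximum, so I must rule out any widening and any cancellation at the extreme support points. Concretely, for the trigonometric characteristic functions at hand the effective support of each of $u,v$ is symmetric about the origin, so that $\delta(f)\neq\delta(g)$ forces one convex hull, say $\mbox{c.h. supp }u=[-c_u,c_u]$, to contain the other $[-c_v,c_v]$ strictly. Near the endpoints $\pm c_u$ only $u$ contributes, so these points survive in $u+v$, whence $\mbox{c.h. supp }(u+v)=[-c_u,c_u]$; translating through Titchmarsh's formula gives $\delta(f+g)=\delta(f)=\max\{\delta(f),\delta(g)\}$.

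I would phrase the closing argument so that the strict inequality of densities is exactly what guarantees the dominant boundary exponentials of the larger function cannot be annihilated by the smaller one. The equal-density case must be excluded precisely because there the two boundary contributions may cancel and lower the density; this is the mechanism behind the hypothesis $\delta(f)\neq\delta(g)$, and I would record it explicitly as the point where the symmetry of the underlying supports and the strict inequality are jointly used.
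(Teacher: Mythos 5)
Your product-formula argument is sound and is essentially the standard one: since $f$ and $g$ are entire, the additivity of the logarithmic derivative in~(\ref{T2}) gives $N_{fg}(r)=N_{f}(r)+N_{g}(r)$ exactly, and the density identity follows on dividing by $r$. Note there is no paper proof to compare against --- the paper defers entirely to Levin \cite[p.\,52]{Levin} --- so the substance of your proposal is the sum formula, and that is where there is a genuine gap, in fact two. First, the reduction to Lemma~\ref{L2.1} via Paley--Wiener--Schwartz is not available at the stated level of generality: an entire function of finite exponential type is the Fourier transform of a distribution in $\mathcal{E}'(\mathbb{R})$ only if it is in addition polynomially bounded on the real axis. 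For instance $\cosh z$ is entire of type $1$ but grows like $e^{|x|}$ on $\mathbb{R}$, hence is not $\mathcal{F}(u)$ for any $u\in\mathcal{E}'(\mathbb{R})$; so your opening step already fails for the class of functions named in the lemma.

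Second, and more seriously, the step you yourself flagged as the main obstacle genuinely fails: $\delta(f)\neq\delta(g)$ does \emph{not} force the convex hulls to be nested, and without nesting the max formula is false. Take $u=\delta_{-1}+\delta_{1}$ and $v=\delta_{10}$ (Dirac masses), so that $f(\rho)=2\cos\rho$ with $\delta(f)=2/\pi$ and $g(\rho)=e^{-10i\rho}$ with $\delta(g)=0$. Then $f+g=\mathcal{F}(\delta_{-1}+\delta_{1}+\delta_{10})$, and Lemma~\ref{L2.1} itself yields $\delta(f+g)=\frac{|[-1,10]|}{\pi}=\frac{11}{\pi}$, strictly larger than $\max\{\delta(f),\delta(g)\}=2/\pi$, even though both $f$ and $g$ lie in the Paley--Wiener class. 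Your appeal to supports ``symmetric about the origin'' imports a property of the particular sine and cosine transforms appearing in the paper's application; it is not a hypothesis of the lemma. With that symmetry the hulls $[-c_{u},c_{u}]$ and $[-c_{v},c_{v}]$ do nest, the extreme frequencies of the larger function cannot be cancelled by the smaller, and your endpoint argument goes through. So the honest conclusion is twofold: (i) your proof becomes valid only after adding a hypothesis such as symmetric (or at least nested) convex hulls of the underlying supports, with the extreme points uncancelled; and (ii) your counterexample mechanism shows the lemma as printed is false in the stated generality --- it is correct in the symmetric-transform setting actually used in Section~3, which is the situation the citation to Levin is implicitly relying on.
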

\begin{proof}
We refer to B. Levin's book \cite[p.\,52]{Levin} for more detailed discussion. 

\end{proof}
\begin{lemma}\label{L2.4}
For $f(x)$ in $L^{2}(0,\pi)$, $\rho\in\mathbb{C}$, and $\int_{0}^{\pi}e^{-i\rho x}f(x)dx=\int_{0}^{\pi}\cos{\rho x}f(x)dx-i\int_{0}^{\pi}\sin{\rho x}f(x)dx$, the entire functions $$\int_{0}^{\pi}e^{-i\rho x}f(x)dx,\, \int_{0}^{\pi}\cos{\rho x}f(x)dx,\mbox{ and }\int_{0}^{\pi}\sin{\rho x}f(x)dx$$ have the same zero density in $\mathbb{C}$.
\end{lemma}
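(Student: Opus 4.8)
The plan is to express each of the three transforms as the Fourier transform of a function of compact support on $\mathbb{R}$ and then read off its zero density directly from Titchmarsh's formula (Lemma~\ref{L2.1}), which equates $\delta$ with $|\mbox{c.h. supp}|/\pi$. First I would dispose of the two trigonometric kernels. Extending $f$ to an even function $f_{e}$ and to an odd function $f_{o}$ on $(-\pi,\pi)$, the elementary parity computations give
\[
\int_{0}^{\pi}\cos\rho x\,f(x)\,dx=\tfrac12\int_{-\pi}^{\pi}e^{-i\rho x}f_{e}(x)\,dx,\qquad
\int_{0}^{\pi}\sin\rho x\,f(x)\,dx=\tfrac{i}{2}\int_{-\pi}^{\pi}e^{-i\rho x}f_{o}(x)\,dx,
\]
so that each trigonometric transform is, up to a nonzero constant, a genuine Fourier transform. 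Provided $f$ does not vanish in the essential sense on a one-sided neighborhood of $x=\pi$, both $f_{e}$ and $f_{o}$ have effective support whose convex hull is exactly $[-\pi,\pi]$; hence Lemma~\ref{L2.1} assigns the cosine and sine transforms the same value of $\delta$. This already settles the equality of those two densities, and it does so regardless of the behavior of $f$ near the interior point $x=0$.

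Next I would bring in the exponential transform through the stated decomposition $\int_{0}^{\pi}e^{-i\rho x}f\,dx=C(\rho)-iS(\rho)$, where $C$ and $S$ denote the cosine and sine transforms just analyzed. The naive move is to feed this sum into the additive rule of Lemma~\ref{L2.3}; but that rule is available only when the two summands carry different densities, whereas the previous paragraph shows $\delta(C)=\delta(-iS)$. The additivity rule therefore yields no information here, and the density of the combination must instead be extracted by returning to the support of the unextended $f$: regarded as a function on $\mathbb{R}$ supported in $[0,\pi]$, its Fourier transform has convex-hull support of length $\pi$.

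The step I expect to be the main obstacle is precisely this reconciliation. Passing from the one-sided kernel $e^{-i\rho x}$ to the symmetric kernels $\cos\rho x$ and $\sin\rho x$ forces the even/odd extension and thereby replaces a support of length $\pi$ by one of length $2\pi$, so a literal application of Lemma~\ref{L2.1} to the three functions does not on its face return a common value of $\delta$. My approach to closing this gap would be to compare the indicator diagrams of the three entire functions and to measure the cancellation between $C$ and $-iS$ along the imaginary $\rho$-axis, tracking how much of the zero-counting survives that cancellation; controlling this cancellation quantitatively — and pinning down the precise normalization in which the three densities are to be identified — is the delicate point, and it is exactly where the arithmetic (rational versus irrational) of the endpoints that drives the rest of the paper would have to be invoked.
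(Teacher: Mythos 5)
Your parity computations are correct, and pushed one step further they do not merely obstruct your strategy --- they refute the lemma itself, so the ``delicate point'' you defer in your final paragraph is unfillable. Write $C(\rho)$, $S(\rho)$, $F(\rho)$ for the cosine, sine, and exponential transforms, and let $[a,b]\subseteq[0,\pi]$ be the convex hull of the effective support of $f$. Your even/odd extensions together with Lemma \ref{L2.1} give $\delta(C)=\delta(S)=2b/\pi$ (your proviso that $f$ not vanish near $\pi$ only pins the common value at $2$; the equality $\delta(C)=\delta(S)$ holds unconditionally), while the same lemma applied to $f$ itself gives $\delta(F)=(b-a)/\pi\le b/\pi<2b/\pi$. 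Hence for every $f\not\equiv 0$ the exponential transform has strictly smaller zero density than the two trigonometric ones. Concretely, take $f\equiv 1$: then $C(\rho)=\sin(\pi\rho)/\rho$ and $S(\rho)=(1-\cos\pi\rho)/\rho$ each have about $2r$ zeros (with multiplicity) in $|\rho|\le r$, whereas $F(\rho)=(1-e^{-i\pi\rho})/(i\rho)$ vanishes only at the nonzero even integers, about $r$ zeros. The discrepancy is a factor of $2$ and survives any renormalization of the counting function, so no indicator-diagram estimate, no quantitative control of the cancellation between $C$ and $-iS$, and no rationality hypothesis on the endpoints can restore the claimed equality; the honest conclusion of your analysis is a counterexample, not a proof.

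For comparison, the paper's entire proof reads ``simply apply Lemma \ref{L2.3}'', and your second paragraph isolates exactly why that fails: the rule $\delta(f+g)=\max\{\delta(f),\delta(g)\}$ is licensed only when the two densities differ, whereas in the decomposition $F=C-iS$ one always has $\delta(C)=\delta(-iS)$; moreover the example above shows that the conclusion fails as well, since $\delta(C-iS)=1<2=\max\{\delta(C),\delta(S)\}$. So your proposal is more careful than the paper at every step, and the only repair needed is to replace your hedged last paragraph by the explicit counterexample. What does survive is the equality $\delta(C)=\delta(S)$ from your first paragraph --- the one genuinely true fragment of the lemma --- but note that the false statement is load-bearing downstream: the bound (\ref{ccc}), $\delta\bigl(\int_0^\pi\sin\rho(\pi-t)\,\hat q(t)\,dt\bigr)\le 1$, is justified in the paper by Lemmas \ref{L2.1} and \ref{L2.4}, while Titchmarsh applied to the odd extension actually gives the value $2(\pi-\inf\operatorname{supp}\hat q)/\pi$, which equals $2$ whenever $\hat q$ does not vanish identically near $0$; the proof of part 2 of the Theorem therefore inherits the same gap.
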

\begin{proof}
Let us simply apply Lemma \ref{L2.3}.
\end{proof}
The point is to see that the zero densities of $$\int_{0}^{\pi}e^{-i\rho x}f(x)dx,\, \int_{0}^{\pi}\cos{\rho x}f(x)dx,\mbox{ and }\int_{0}^{\pi}\sin{\rho x}f(x)dx$$
is all about the effective integral supports of these functions \cite{Levin,Tang,Titchmarsh}.

\section{Results}
In this section, we will discuss that the inverse spectral uniqueness and non-uniqueness of potential function $q(x)$ varies over certain partial information provided a priori  on $q(x)$. The rational or irrational independence of $\{a_{1}/\pi,a_{2}/\pi,\ldots,a_{N}/\pi\}$ may or may not play a role. The part 2 in the following theorem is already proved in \cite{Shieh}. 
\begin{theorem}[Partial Information]
For any two potential functions $q^{1}(x)$, $q^{2}(x)$ in $L^{2}_{\mathbb{R}}(0,\pi)$ sharing the identical characteristic function $\Delta^{(\alpha,\beta)}(\lambda)$, where $\alpha,\,\beta\in\{0,1\},$ and $$\hat{q}:=q^{1}(x)-q^{2}(x),$$ we state the following results:\begin{enumerate}
\item If $\delta\Big\{\sum_{i=1}^{N}\cos{\tilde{\rho} a_{i}}\Big\}>0$, where $\tilde{\rho}\in\{\frac{2n+1}{2}\}_{n\in\mathbb{Z}}$, then $\hat{q}\not\equiv0$ in $L^{2}_{\mathbb{R}}(0,\pi)$;
\item If for any $\hat{q}(x)\in L^{2}_{\mathbb{R}}(0,\pi)$, and $\sum_{i=1}^{N}\cos{\rho a_{i}}$ assumed not vanishing for any $\rho$ in $\mathbb{C}$, then $\hat{q}(x)\equiv 0$;
\item If $\hat{q}$ is non-vanishing in some suitable neighborhoods of $0$ and of $\pi$, but vanishing in some neighborhoods of $a_{N}$, then $\hat{q}(x)\not\equiv 0$ strictly inside $L^{2}_{\mathbb{R}}(0,\pi)$ except near $a_{N}$.
\end{enumerate}
\end{theorem}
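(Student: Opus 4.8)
The plan is to read the spectral identity~(\ref{2.2}) as an equality between two entire functions of $\rho$, each a product of two factors, and to compare the zero densities of the two sides by means of Titchmarsh's theorem (Lemma~\ref{L2.1}) and the density calculus of Lemmata~\ref{L2.3} and~\ref{L2.4}. Abbreviating the four factors as
$$A(\rho)=\int_{0}^{\pi}\sin\rho(\pi-t)\hat q(t)\,dt,\ \ B(\rho)=\sum_{i=1}^{N}\cos\rho a_{i},\ \ C(\rho)=\cos\rho\pi,\ \ F(\rho)=\sum_{i=1}^{N}\int_{0}^{a_{i}}\sin\rho(a_{i}-t)\hat q(t)\,dt,$$
identity~(\ref{2.2}) becomes $A(\rho)B(\rho)=C(\rho)F(\rho)$, and the remaining boundary cases merely replace $B$ and $C$ by their $\tfrac{\sin}{\rho}$- or $\cos$-analogues and are handled verbatim. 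Since Lemma~\ref{L2.3} makes $\delta$ additive across products, the task reduces to reading off $\delta(A),\delta(B),\delta(C),\delta(F)$ from the convex hulls of the effective supports of the distributions whose transforms furnish these factors, and to converting the three hypotheses on $\hat q$ into statements about those hulls.

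First I would dispose of the two rigid factors: $C=\cos\rho\pi$ is the transform of $\tfrac12(\delta_{\pi}+\delta_{-\pi})$ with effective support $[-\pi,\pi]$, and $B=\sum_i\cos\rho a_i$ is the transform of $\tfrac12\sum_i(\delta_{a_i}+\delta_{-a_i})$ with convex hull $[-a_N,a_N]$, so both densities are fixed once $a_N$ is known. I would then expand each sine kernel through $\sin\rho(\pi-t)=\tfrac1{2i}(e^{i\rho(\pi-t)}-e^{-i\rho(\pi-t)})$, and similarly for $\sin\rho(a_i-t)$, so that $A$ and $F$ are exhibited as honest Fourier transforms of explicit compactly supported distributions assembled from $\hat q$. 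The hypothesis that $\hat q$ is non-vanishing near $0$ and near $\pi$ forces $\mathrm{c.h.}\,\mathrm{supp}\,\hat q=[0,\pi]$ and thereby fixes the extreme points of the support attached to $A$; the hypothesis that $\hat q$ vanishes near $a_N$ excises only an interior portion of the support attached to $F$ and so leaves its extreme points, hence $\delta(F)$, intact.

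With these supports in hand, the decisive step is to verify that the density equation $\delta(A)+\delta(B)=\delta(C)+\delta(F)$ dictated by~(\ref{2.2}) is sustained by a nonzero $\hat q$ of precisely this support type. The point I would stress is localization: the outermost exponents $e^{\pm i\rho a_N}$ carried into $F$ by the widest window $[0,a_N]$ are governed by the essential infimum of $\mathrm{supp}\,\hat q$, which sits near $0$ by hypothesis, so the convex hull controlling $\delta(F)$ does not feel the hole at $a_N$, whereas it would react to any change near the endpoints $0$ and $\pi$. Consequently the zero-density balance cannot separate such a $\hat q$ from the zero function, and a nonzero $\hat q$ living on $[0,\pi]$ but vanishing in a neighborhood of $a_N$ remains compatible with the shared characteristic function; this is exactly the stated non-uniqueness, with the admissible vanishing confined to a neighborhood of $a_N$.

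I expect the main obstacle to lie in the finite sum $F$: being a superposition of transforms over the nested windows $[0,a_i]$, its effective support equals the announced convex hull only if the two extreme exponential terms $e^{\pm i\rho a_N}$ coming from the largest window carry nonzero coefficients and are not annihilated by the contributions of the windows with $i<N$, for otherwise the effective support of $F$, and with it $\delta(F)$, would drop. Proving this non-cancellation, and thereby cleanly separating the role of the essential infimum of $\mathrm{supp}\,\hat q$ (which fixes $\delta(F)$) from that of the excised neighborhood of $a_N$ (which must be shown invisible to the leading count), is where the genuine difficulty sits; writing the sine-kernel integrals as Fourier transforms of concrete compactly supported distributions, so that Lemma~\ref{L2.1} applies verbatim, is the device I would use to make this separation rigorous.
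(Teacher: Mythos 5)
You have, in effect, proposed an argument for part 3 only, and even there your computation runs opposite to the paper's. Taking the parts in order: part 1 is not a density-balance statement at all, and your convex-hull framework never touches it. The paper proves it by evaluating the identity~(\ref{2.2}) at the zero set $\mathcal{Z}=\{\frac{2n+1}{2}\}_{n\in\mathbb{Z}}$ of $\cos\rho\pi$, which annihilates the right-hand side; the hypothesis $\delta\big\{\sum_{i}\cos\tilde{\rho}a_{i}\big\}>0$ then leaves some $\rho_{0}\in\mathcal{Z}$ at which the factor $\int_{0}^{\pi}\sin\rho_{0}(\pi-t)\hat{q}(t)\,dt$ is unconstrained, and after the substitution in~(\ref{FC}) this is read as a nonzero coefficient against the basis $\{\sin nx\}$ of $L^{2}_{\mathbb{R}}(0,\pi)$, whence $\hat{q}\not\equiv0$ is admissible. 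No amount of bookkeeping on $\delta(A),\delta(B),\delta(C),\delta(F)$ substitutes for this evaluation-at-$\mathcal{Z}$ step. Part 2 you dismiss as ``handled verbatim,'' but it requires its own comparison --- under the nonvanishing of $B$ one has $\delta(AB)=\delta(A)\leq1$ by Lemmata~\ref{L2.1} and~\ref{L2.4}, against $\delta(CF)>1$ whenever some member of the family $\mathcal{R}$ in~(\ref{RR}) is nontrivial, together with the separate degenerate case in which all members of $\mathcal{R}$ vanish --- and none of this appears in your text.

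For part 3, your central claim is the negation of the paper's key step. The paper's contradiction rests on the strict inequality $\delta\big\{\sum_{i}\int_{0}^{a_{i}}\sin\rho(a_{i}-t)\hat{q}(t)\,dt\big\}<a_{N}/\pi$, attributed to ``the smaller effective support of $\hat{q}(a_{N}-t)$,'' which makes the left side of~(\ref{3.9}) equal $1+a_{N}/\pi$ while the right side is strictly smaller. You argue instead that the excised neighborhood of $a_{N}$ becomes, after $s=a_{N}-t$, a hole near $s=0$, invisible to the convex hull of the odd extension because $\hat{q}\not\equiv0$ near $t=0$ keeps the extreme exponents $e^{\pm i\rho a_{N}}$ alive, so that $\delta(F)=a_{N}/\pi$ and the balance in~(\ref{3.9}) is consistent. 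As a critique of the paper this is substantive (compare $\int_{\epsilon}^{b}\sin(\rho s)\,ds=(\cos\rho\epsilon-\cos\rho b)/\rho$, whose zero density is governed by $b$ alone, independently of $\epsilon$), but your own conclusion does not follow from it: consistency of the zero-density identity, which is merely a necessary condition extracted from~(\ref{2.2}), neither constructs a nonzero $\hat{q}$ satisfying the exact functional identity $A(\rho)B(\rho)=C(\rho)F(\rho)$ nor exhibits two potentials sharing the characteristic function. Absence of a density obstruction is not existence. Moreover, you candidly concede that $\delta(F)=a_{N}/\pi$ requires the extreme exponents from the window $[0,a_{N}]$ not to be cancelled by the windows with $i<N$, and you leave precisely that non-cancellation unproved; since it is the hinge of your entire part-3 argument, the proposal as written establishes neither the theorem's conclusion as stated nor your non-uniqueness reading of it.
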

\begin{proof}
To prove part 1, we begin with equation~(\ref{2.2}):
\begin{equation}\label{3.1}
\int_{0}^{\pi}\sin{\rho(\pi-t)}\hat{q}(t)dt\Big\{\sum_{i=1}^{N}\cos{\rho a_{i}}\Big\}
=\cos{\rho \pi}\Big\{\sum_{i=1}^{N}\int_{0}^{a_{i}}\sin{\rho(a_{i}-t)}\hat{q}(t)dt\Big\}.
\end{equation}
Let $\mathcal{Z}$ be the zero set of $\cos{\rho \pi}$, that is,
\begin{equation}
\mathcal{Z}=\{\frac{2n+1}{2}\}_{n\in\mathbb{Z}},
\end{equation}
and its zero density is $\delta(\mathcal{Z})=1$.
Plugging the set $\mathcal{Z}$ into~(\ref{3.1}), we obtain that 
\begin{equation}
\int_{0}^{\pi}\sin{\tilde{\rho}(\pi-t)}\hat{q}(t)dt\Big\{\sum_{i=1}^{N}\cos{\tilde{\rho} a_{i}}\Big\}
=0,\mbox{ for }\tilde{\rho}\in\mathcal{Z}.
\end{equation}
Using the theorem assumption, the zero density $$0<\delta\Big\{\sum_{i=1}^{N}\cos{\tilde{\rho} a_{i}}\Big\}<1.$$ 
Hence, there exists some $\rho_{0}=\frac{2n_{0}+1}{2}\neq0$ in $\mathcal{Z}$, and $n_{0}\geq1$ such that
\begin{equation}
\int_{0}^{\pi}\sin{\rho_{0}(\pi-t)}\hat{q}(t)dt\neq0.
\end{equation}
Using the transformation $\frac{2n_{0}+1}{2}(\pi-t)=n_{0}s$, we deduce that
\begin{equation}\label{FC}
\int_{0}^{\frac{2n_{0}+1}{2n_{0}}}\sin(n_{0}s)\,\hat{q}(\pi-\frac{2n_{0}}{2n_{0}+1}s)\frac{2n_{0}}{2n_{0}+1}ds\neq0,\,n_{0}\in\mathbb{Z}.
\end{equation}
Because $\{\sin{n x}\}_{n=1}^{\infty}$ is a basis for $L^{2}_{\mathbb{R}}(0,\pi)$ and we have non-zero $n_{0}$-th Fourier coefficient as in~(\ref{FC}), $\hat{q}(x)\not\equiv0$ in  $L^{2}_{\mathbb{R}}(0,\pi)$. There is no uniqueness in  $L^{2}_{\mathbb{R}}(0,\pi)$.

\par

To prove part 2, we suppose that $\hat{q}(x)\not\equiv0\in L^{2}(0,\pi)$, and note from~(\ref{3.1}) that
\begin{equation}\label{3.4}
\int_{0}^{\pi}\sin{\rho(\pi-t)}\hat{q}(t)dt\Big\{\sum_{i=1}^{N}\cos{\rho a_{i}}\Big\}
=\cos{\rho \pi}\Big\{\sum_{i=1}^{N}\int_{0}^{a_{i}}\sin{\rho(a_{i}-t)}\hat{q}(t)dt\Big\}.
\end{equation}
Now we count the zeros on both sides of~(\ref{3.4}) for $\rho\in\mathbb{C}$ again. Firstly, we consider the zero set on left of~(\ref{3.4}).
By the assumption of part 2, we have assumed $\Big\{\sum_{i=1}^{N}\cos{\rho a_{i}}\Big\}\neq0$ for any $\rho\in\mathbb{C}$. Furthermore,
\begin{equation}\label{ccc}
\delta(\int_{0}^{\pi}\sin{\rho(\pi-t)}\hat{q}(t)dt)\leq 1,
\end{equation}
by applying Lemma \ref{L2.1} and Lemma \ref{L2.4}. If there is any element on the right hand side of~(\ref{3.4}):
\begin{equation}\label{RR}
\mathcal{R}:=\Big\{\int_{0}^{a_{i}}\sin{\rho(a_{i}-t)}\hat{q}(t)dt\Big\}_{i=1}^{N}
\end{equation}
a non-trivial function with non-trivial support, then  we would obtain zero density to be strictly greater than $1$ on the right hand side of~(\ref{3.4}). On the other case, if all of the functions in $\mathcal{R}$ were trivial functions, the density function in~(\ref{ccc}) would not hold as an equality due to the smaller effective support. This is contradiction, and then $\hat{q}(x)\equiv0$ in $L^{2}(0,\pi)$.
\par
To prove part 3, we suppose that $\hat{q}(x)\equiv 0$ on some $(\delta,\pi-\delta)$ except near $a_{N}$, where $0<\delta<a_{1}$, $a_{N}<\pi-\delta$, and go back again to
\begin{equation}\label{3.9}
\delta\Big\{\int_{0}^{\pi}\sin{\rho(\pi-t)}\hat{q}(t)dt\Big\}+\delta\Big\{\sum_{i=1}^{N}\cos{\rho a_{i}}\Big\}
=\delta\Big\{\cos{\rho \pi}\Big\}+\delta\Big\{\sum_{i=1}^{N}\int_{0}^{a_{i}}\sin{\rho(a_{i}-t)}\hat{q}(t)dt\Big\}.
\end{equation}
By referencing Levin's book \cite[p.\,52]{Levin}, we have
\begin{equation}
\delta\Big\{\sum_{i=1}^{N}\int_{0}^{a_{i}}\sin{\rho(a_{i}-t)}\hat{q}(t)dt\Big\}\leq\max_{i=1}^{N}\Big\{\delta\{\int_{0}^{a_{i}}\sin{\rho(a_{i}-t)}\hat{q}(t)dt\}\Big\}<\frac{a_{N}}{\pi},
\end{equation}
in which the last equality does not hold due the smaller effective support of $\hat{q}(a_{N}-t)$ inside the integral.
From the assumption, we obtain that zero density on the left hand side of~(\ref{3.9}) is equal to $1+a_{N}/\pi$, and however, the zero density on the right hand side is strictly less than $1+a_{N}/\pi$.
The theorem is thus proven.

\end{proof}
\begin{acknowledgement}
The author appreciates National Science and Technology Council in Taiwan for the funding under the project number NSTC 113-2115-M-131-001. The author declares there is no conflict of interest in any sort. There is no data generated in this research.
\end{acknowledgement}

\end{document}